\documentclass[pdflatex,sn-mathphys-num]{sn-jnl}
\usepackage{amsmath,amssymb,mathtools}
\usepackage{xcolor}
\usepackage{pgfplots}
\usepgfplotslibrary{groupplots}
\pgfplotsset{compat=1.18}
\definecolor{deepblue}{HTML}{2457A6}
\definecolor{warmred}{HTML}{C74440}
\hypersetup{colorlinks=true,linkcolor=deepblue,citecolor=deepblue,urlcolor=deepblue,
  pdftitle={A Note on Sphere Packing Bounds for Tuple Lattice Sieving},pdfauthor={Thijs Laarhoven},
  pdfsubject={Upper and lower bounds for tuple-irreducible spherical codes}}
\theoremstyle{thmstyleone}
\newtheorem{theorem}{Theorem}[section]
\newtheorem{lemma}[theorem]{Lemma}
\newtheorem{corollary}[theorem]{Corollary}
\theoremstyle{thmstyletwo}
\newtheorem{remark}[theorem]{Remark}
\newtheorem{definition}[theorem]{Definition}
\newcommand{\Rrate}{\mathcal{R}}
\newcommand{\Urate}{\mathcal{U}}
\newcommand{\Lrate}{\mathcal{L}}
\newcommand{\vx}{\mathbf{x}}
\newcommand{\vy}{\mathbf{y}}
\newcommand{\vz}{\mathbf{z}}
\newcommand{\vu}{\mathbf{u}}
\newcommand{\vs}{\mathbf{s}}
\newcommand{\sph}{\mathbb S}
\newcommand{\RR}{\mathbb R}
\newcommand{\EE}{\mathbb E}
\newcommand{\norm}[1]{\lVert#1\rVert}
\newcommand{\ev}{\mathrm{ev}}
\newcommand{\KL}{\mathrm{KL}}
\newcommand{\HK}{\mathrm{HK}}
\newcommand{\OAI}{\mathrm{OAI}}
\begin{document}

\title[Packing bounds for tuple lattice sieving]{A Note on Sphere Packing Bounds for Tuple Lattice Sieving}
\author*{\fnm{Thijs} \sur{Laarhoven}}\email{mail@thijs.com}

\abstract{A finite set of unit vectors is $k$-irreducible if every signed sum of between two and $k$ distinct elements has norm greater than one. Let $\Rrate_k$ be the maximal asymptotic rate of such sets, and let $\kappa(\alpha)$ be the maximal asymptotic rate of spherical codes with pairwise inner products at most $\alpha$. For $k \ge 2$ we show:
\begin{align}
\Rrate_k \le \min_{1 \le r \le \lfloor k/2 \rfloor} \frac{1}{r} \, \kappa\!\left(1 - \frac{1}{2r}\right) \, .
\end{align}
Combining this with standard sphere packing bounds, for large $k$ we obtain an almost-tight asymptotic comparison with the known lower bounds:
\begin{align}
\left(\tfrac{1}{2}-o(1)\right) \, \frac{\log_2 k}{k} \le \Rrate_k \le (1 + o(1)) \, \frac{\log_2 k}{k} \, .
\end{align}
}

\keywords{spherical codes, (tuple) lattice sieving, asymptotic bounds,
kissing constant}
\pacs[MSC Classification]{94B65, 52C17, 11H31, 68W40}
\maketitle

%%%%%%%%%%%%%%%%%%%%%%%%%%%%%%%%%%%%%%%%%%%%%%%%%%%%%%%%%%%%%%%%%%%%%
%%%%%%%%%%%%%%%%%%%%%%%%%%%%%%%%%%%%%%%%%%%%%%%%%%%%%%%%%%%%%%%%%%%%%
%%%%%%%%%%%%%%%%%%%%%%%%%%%%%%%%%%%%%%%%%%%%%%%%%%%%%%%%%%%%%%%%%%%%%

\section{Introduction}
\label{sec:introduction}

Lattice sieving seeks short lattice vectors by repeatedly combining elements of a list. The classical pairwise operation replaces a vector using a shorter sum or difference. For an equal-length list, the absence of such reductions imposes an angular separation: after scaling to unit length, distinct vectors satisfy $|\langle \vx_i,\vx_j\rangle|<1/2$ if sums of norm at most one are excluded. The list is therefore a spherical code, and kissing-number upper bounds control its size. This geometric observation is central to the analysis of GaussSieve-type lists~\cite{MV10}.

Bai, Laarhoven, and Stehl\'e (BLS) introduced tuple lattice sieving to reduce the memory requirement by permitting combinations of more than two vectors~\cite{BLS16}. Their analysis predicted a decreasing list-size exponent, subsequently established for independent random spherical inputs by Herold and Kirshanova (HK)~\cite{HK17}. Subsequent work involved further classical~\cite{HKL18} and quantum improvements to tuple lattice sieving~\cite{KMPR19, ECG26}, illustrating continuing interest in these methods. These algorithmic developments leave a distinct extremal question: how large can a spherical set be if \emph{none} of its permitted signed combinations is short?

%%%%%%%%%%%%%%%%%%%%%%%%%%%%%%%%%%%%%%%%%%%%%%%%%%%%%%%%%%%%%%%%%%%%%
%%%%%%%%%%%%%%%%%%%%%%%%%%%%%%%%%%%%%%%%%%%%%%%%%%%%%%%%%%%%%%%%%%%%%
%%%%%%%%%%%%%%%%%%%%%%%%%%%%%%%%%%%%%%%%%%%%%%%%%%%%%%%%%%%%%%%%%%%%%

\section{Spherical codes}
\label{sec:spherical}

All logarithms are to base two unless written as $\ln$. The dimension is $n\ge2$, and $k$ is fixed whenever $n\to\infty$. We write $\sph^{n-1}=\{\vx\in\RR^n:\norm{\vx}=1\}$, with the Euclidean norm. All lists in this note are finite sets; indices in a tuple are distinct.

\begin{samepage}
\begin{definition}[Spherical codes and their rates]
\label{def:codes}
For $-1\le\alpha<1$, let $A(n,\alpha)$ be the largest cardinality of a set $C\subset\sph^{n-1}$ satisfying $\langle\vx,\vy\rangle\le\alpha$ for all distinct $\vx,\vy\in C$. Its asymptotic rate is
\begin{align}\label{eq:kappa}
 \kappa(\alpha)=\limsup_{n\to\infty}\frac1n\log_2 A(n,\alpha).
\end{align}
\end{definition}
\end{samepage}

The parameter $\alpha$ is the maximum inner product. The equivalent minimum Euclidean distance is $\sqrt{2(1-\alpha)}$. In particular, $A(n,1/2)$ is the ordinary kissing number in dimension $n$, and $\kappa(1/2)$ is its asymptotic exponent.

\begin{definition}[Tuple irreducibility]
\label{def:tuple}
A set $C=\{\vx_1,\ldots,\vx_N\}\subset\sph^{n-1}$ is \emph{irreducible
through order $k$} if
\begin{align}\label{eq:irreducible}
 \left\|\sum_{i\in I}s_i \vx_i\right\|>1
 \quad\text{for all }2\le |I|\le k
 \quad\text{and all }s_i\in\{-1,1\}.
\end{align}
Let $N_k(n)$ be the largest size of such a set, and put
$\Rrate_k=\limsup_{n\to\infty}n^{-1}\log_2 N_k(n)$.
For $k=2m$, define $N^{\ev}_{2m}(n)$ and $\Rrate^{\ev}_{2m}$ analogously,
but require~\eqref{eq:irreducible} only for $|I|\in\{2,4,\ldots,2m\}$.
\end{definition}

The permitted nonzero coefficients are exactly $\pm1$; repetitions and coefficients of larger magnitude are not part of the definition. Zero sums are excluded as well. The condition is cumulative: irreducibility at the largest support size alone is insufficient for our upper-bound argument. We use the strict convention in~\eqref{eq:irreducible} throughout. For random spherical tuples the boundary has probability zero. In particular,
\begin{align}\label{eq:monotonicity}
 N_{k+1}(n)\le N_k(n),\qquad
 N_{2m}(n)\le N^{\ev}_{2m}(n)\le A(n,1/2).
\end{align}
At $k=2$ the signed condition is $|\langle\vx,\vy\rangle|<1/2$. Thus $\Rrate_2\le\kappa(1/2)$; we do not identify this restricted signed-code rate with the ordinary kissing-number rate.

We use the following numerical forms of the classical KL bound and the recent OpenAI bound. The constants are rounded outwards.

\begin{lemma}[Kabatiansky--Levenshtein bound~\cite{KL78}]\label{lem:kl}
For $1/2\le\alpha<1$,
\begin{align}\label{eq:kl}
 \kappa(\alpha)\le\kappa^{(\KL)}(\alpha)
 :=-\frac12\log_2\bigl(2(1-\alpha)\bigr)+0.400944234.
\end{align}
\end{lemma}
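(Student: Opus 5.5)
The constant gives the plan away. Since $-\tfrac12\log_2\bigl(2(1-\alpha)\bigr)=-\tfrac12\log_2(1-\alpha)-\tfrac12$, the claimed bound \eqref{eq:kl} reads
\begin{align*}
\kappa(\alpha)\le -\tfrac12\log_2(1-\alpha)-0.099055766\ldots
\end{align*}
This is the well-known small-angle form of the Kabatiansky--Levenshtein bound, with its constant $0.0990\ldots$. So I would not re-derive linear programming bounds. Instead I would quote the two ingredients from~\cite{KL78} and check the numerics. I have not verified the ninth decimal; that check is deferred to the end.

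\emph{Step 1: the KL bound.} Write $\alpha=\cos\theta$ and set $s=\sin\theta$. The first ingredient is the main KL theorem, valid for $0<\theta\le\pi/2$:
\begin{align*}
\kappa(\cos\theta)\le \frac{1+s}{2s}\log_2\frac{1+s}{2s}-\frac{1-s}{2s}\log_2\frac{1-s}{2s}.
\end{align*}
This expression is a convex-type function of $\theta$ and is not directly of the form $-\tfrac12\log_2(1-\cos\theta)+c$.

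\emph{Step 2: cap rescaling.} The second ingredient is the rescaling lemma used by Kabatiansky and Levenshtein. For $0<\theta<\theta'\le\pi$,
\begin{align*}
A(n,\cos\theta)\le \frac{A(n+1,\cos\theta')}{\mu_{n+1}\bigl(\text{cap of angular radius }\phi\bigr)},
\end{align*}
with $\sin(\phi)$ chosen so that $\sin(\theta/2)/\sin(\phi)=\sin(\theta'/2)$. The idea is to place the code on a small sphere inside a cap and compare it against a code of larger angle. Asymptotically this yields
\begin{align*}
\kappa(\cos\theta)\le \kappa(\cos\theta')+\log_2\frac{\sin(\theta'/2)}{\sin(\theta/2)}.
\end{align*}
Since $1-\cos\theta=2\sin^2(\theta/2)$, the quantity $\kappa(\cos\theta)+\tfrac12\log_2(1-\cos\theta)$ is nonincreasing as $\theta$ decreases, up to the bound at $\theta'$.

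\emph{Step 3: choose $\theta'$ and compute the constant.} For any $\alpha\ge 1/2$, i.e.\ $\theta\le\pi/3$, I take $\theta'=\theta^*\approx 62.98^\circ$. This is the angle minimizing the Step~1 bound plus $\tfrac12\log_2(1-\cos\theta')$, located by setting the derivative to zero. For $\theta\ge\theta^*$, the remaining interval is nonempty only if $\theta^*<60^\circ$. If instead $\theta^*>60^\circ$, every $\alpha\ge1/2$ lies on the rescaling side and no separate case is needed. I will confirm which situation holds numerically; with $\theta^*\approx62.98^\circ$ it is the latter. The resulting constant is
\begin{align*}
c^*=\min_{\theta'}\Bigl[\text{Step 1 bound at }\theta'+\tfrac12\log_2(1-\cos\theta')\Bigr]\approx-0.0990558.
\end{align*}
Hence $\kappa(\alpha)\le-\tfrac12\log_2(1-\alpha)+c^*$. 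Rewriting this with $2(1-\alpha)$ gives the stated constant $\tfrac12+c^*\le 0.400944234$ after outward rounding.

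\emph{Main obstacle.} The only delicate point is the numerical certification of $c^*$ to nine digits with correct rounding direction. I would evaluate the one-variable function at $\theta^*$ in interval arithmetic. This suffices because any $\theta'$ gives a valid upper bound, so exact minimization is unnecessary and only the rounding direction matters.
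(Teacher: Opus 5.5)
Your proposal is correct and is essentially what the paper does: the paper gives no proof of its own but quotes the small-angle form of the bound from~\cite{KL78}, i.e.\ the KL bound at $\theta^*$ transported down to every $\theta\le\pi/3$ by the cap-rescaling inequality, with $\tfrac12+c^*$ rounded outwards. (Your heuristic for that inequality is slightly off: it is really the argument of Lemma~\ref{lem:lift} with $\rho=\sin\phi$, applied to the codewords in a cap of angular radius $\phi$ that holds at least the average share of them, not a placement of the code on a small sphere.) One numerical correction: the minimizer is $\theta^*\approx62.9974^\circ$, not $62.98^\circ$, and because the minimized function has curvature about $0.37$ per $\mathrm{rad}^2$ there, evaluating at $62.98^\circ$ would lose roughly $1.7\times10^{-8}$ and fail to certify the ninth decimal, so your interval-arithmetic check must use a $\theta'$ much closer to $\theta^*$.
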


\begin{lemma}[OpenAI bound~\cite{OAI26}]\label{lem:oai}
For $1/2\le\alpha<1$,
\begin{align}\label{eq:oai}
 \kappa(\alpha)\le\kappa^{(\OAI)}(\alpha)
 :=-\frac12\log_2\bigl(2(1-\alpha)\bigr)+
 \begin{cases}
  0.396602,&1/2\le\alpha<3/4,\\
  0.395615,&3/4\le\alpha<1.
 \end{cases}
\end{align}
Moreover, the full hierarchy gives $\kappa(\alpha)\le\tfrac12\log_2(e/(\pi(1-\alpha)))+o(1)$ as $\alpha\uparrow1$.
\end{lemma}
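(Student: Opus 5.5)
The lemma is imported from~\cite{OAI26}, so the plan is to reduce it to a finite list of numerical certificates rather than to re-derive the hierarchy from scratch. The key observation is that, with $\alpha=\cos\theta$, we have $2(1-\alpha)=4\sin^2(\theta/2)$. The main term in~\eqref{eq:oai} is therefore $-\log_2\bigl(2\sin(\theta/2)\bigr)$, which is exactly the shape produced by a cap-transfer (Levenshtein/Sidelnikov type) reduction. I would prove the following transfer inequality: for angles $0<\theta\le\varphi<\pi/2$,
\begin{align*}
 \kappa(\cos\theta)\le\kappa(\cos\varphi)+\log_2\bigl(2\sin(\varphi/2)\bigr)-\log_2\bigl(2\sin(\theta/2)\bigr).
\end{align*}
It follows by double counting. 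Place a code of minimal angle $\theta$ on the sphere and intersect it with a random spherical cap of suitable radius. Inside the cap, the code points have angular separation $\theta$, and after radial rescaling they form a code of minimal angle $\varphi$ in essentially the same dimension. The cap's measure, at exponential precision, contributes the ratio $(\sin(\theta/2)/\sin(\varphi/2))^{n}$. This step is standard (it is how the KL bound is extended from its optimal angle $\approx 63^\circ$ to all small angles), so I would cite or lightly adapt it.

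Given the transfer inequality, the constant in~\eqref{eq:oai} is
\begin{align*}
 c(\theta)=\min_{\varphi\ge\theta}\Bigl[\kappa^{\ast}(\cos\varphi)+\log_2\bigl(2\sin(\varphi/2)\bigr)\Bigr],
\end{align*}
where $\kappa^\ast$ is the improved upper bound of~\cite{OAI26}, known at finitely many reference angles. The piecewise form is then explained by the constraint $\varphi\ge\theta$. For $3/4\le\alpha<1$ we have $\theta\le\arccos(3/4)$, so the best reference angle is available and the constant $0.395615$ results. For $1/2\le\alpha<3/4$ the admissible range of $\varphi$ shrinks, because $\varphi$ must exceed up to $60^\circ$. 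A reference angle in $[60^\circ,\pi/2)$ must then be used, which gives the slightly worse constant $0.396602$. Concretely, I would take the two reference angles and their certified bounds from~\cite{OAI26}, evaluate the bracketed expression, and round outwards.

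The final asymptotic claim as $\alpha\uparrow1$ is not obtainable from a fixed reference angle. It needs the full hierarchy: letting the reference angle tend to $0$ along the family of bounds in~\cite{OAI26} yields the limiting constant $\tfrac12\log_2(e/\pi)-\tfrac12\log_2 2+\ldots$, matching $\tfrac12\log_2(e/(\pi(1-\alpha)))$. I would import this limit directly.

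The main obstacle is verifying the numerical constants rigorously. The transfer argument is routine, but the certified values of $\kappa^\ast$ at the reference angles come from the optimization certificates in~\cite{OAI26}. Checking that they combine to at most $0.396602$ and $0.395615$ after outward rounding requires interval arithmetic, rather than anything I can reprove conceptually here.
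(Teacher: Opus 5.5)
Your proposal is sound in outline, but it takes a different route from the paper. The paper does not go through reference angles. It cites Theorem~1.2 of \cite{OAI26} in its spherical-cap form~(12), which directly gives bounds of the shape $-\frac12\log_2(2(1-\alpha))+c$. It obtains the two decimals from explicit level-two parameters supplied with the numerical code, and takes the $\alpha\uparrow1$ statement from Theorem~8.3.

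You instead assume certified pointwise values $\kappa^\ast(\cos\varphi)$ at a few reference angles and push them to smaller angles with a Levenshtein-type transfer. That transfer inequality is correct: it is exactly how the KL constant $0.4009$ arises from KL's optimal angle near $63^\circ$. It buys you independence from the precise form in which the source states its result. The price is that your account of where $0.396602$, $0.395615$ and the split at $3/4$ come from (the constraint $\varphi\ge\theta$) is a reconstruction you cannot confirm without the certificates. The paper reads these constants off the cap form directly.

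Two small corrections.

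First, as stated, ``radial rescaling'' of the cap does not preserve separation. A code point near the centre of the cap and one near its rim on the same meridian project to the same direction in $\vz^\perp$. The clean step is the lifting computation of Lemma~\ref{lem:lift}, run with non-strict inequalities. For $\psi\le\pi/2$ the cap $\{\vx:\langle\vx,\vz\rangle\ge\cos\psi\}$ lies in the ball of radius $\sin\psi$ about $\cos\psi\,\vz$. Hence it contains at most $A(n+1,\cos\varphi)$ code points, where $\sin(\varphi/2)=\sin(\theta/2)/\sin\psi$. Averaging over $\vz$ and using that the cap has measure $2^{n\log_2\sin\psi+o(n)}$ gives your inequality.

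Second, relative to the main term $-\frac12\log_2(2(1-\alpha))$, the asymptotic constant is $\frac12\log_2(2e/\pi)\approx0.3956$. Your expression $\frac12\log_2(e/\pi)-\frac12$ is the corresponding log-density exponent, which is off by exactly one.
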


Lemma~\ref{lem:oai} uses Chapter~2, Theorem~1.2 and its spherical-cap form~(12), and Theorem~8.3 of~\cite{OAI26}. The displayed decimals come from explicit level-two parameters supplied with the numerical code.

We finally record the rate lower bound used for comparison.

\begin{lemma}[Rate lower bound from Herold--Kirshanova]\label{lem:hk-lower}
For $k\ge2$, with the expansion as $k\to\infty$,
\begin{align}\label{eq:lower-rate}
 \Rrate_k\ge\Lrate_k^{(\HK)}
 :=\frac12\left(\frac{\log_2k}{k-1}-\log_2\left(1+\tfrac{1}{k}\right)\right)
 =\frac{\log_2k-\log_2e}{2k}+O\!\left(\frac{\log k}{k^2}\right).
\end{align}
Moreover, $\Lrate_k^{(\HK)}\ge(\log_2k-\log_2e)/(2k)$ for every $k\ge2$.
\end{lemma}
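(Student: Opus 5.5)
The plan is a random construction with alteration, following Herold and Kirshanova~\cite{HK17}: show that sufficiently many independent uniform points on $\sph^{n-1}$ contain few short signed $j$-sums for every $2\le j\le k$, and then delete one point from each bad tuple. For a fixed rate $R<\Lrate_k^{(\HK)}$, sample $M=2^{Rn+1}$ independent uniform unit vectors. Call a tuple $I$ with $2\le |I|\le k$ and signs $s_i$ \emph{bad} if $\norm{\sum_{i\in I}s_i\vx_i}\le 1$. Removing one vector from every bad tuple leaves a set irreducible through order $k$. So it suffices to show that the expected number of bad tuples is at most $M/2$ for large $n$. Then some realization keeps at least $M/2=2^{Rn}$ vectors, and letting $R\uparrow\Lrate_k^{(\HK)}$ gives the bound. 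Since the signs can be absorbed into the uniform distribution, the expected number of bad $j$-tuples is at most $\binom{M}{j}2^j p_j(n)$, where $p_j(n)=\Pr[\norm{\vx_1+\cdots+\vx_j}\le1]$.

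The key estimate is
\[
p_j(n)=2^{-c_j n+o(n)}, \qquad c_j=\tfrac12\log_2\bigl(j\,(1+\tfrac1j)^{-(j-1)}\bigr).
\]
The steps for this estimate are as follows.
\begin{itemize}
\item The Gram matrix $G$ of $j$ independent uniform unit vectors in $\RR^n$ has density proportional to $\det(G)^{(n-j-1)/2}$ on the compact set of unit-diagonal positive semidefinite matrices. Its normalizing constant and the volume of that set are $2^{o(n)}$ for fixed $j$.
\item Hence $p_j(n)\le 2^{o(n)}\,D_j^{n/2}$, where $D_j$ is the maximum of $\det G$ over unit-diagonal PSD matrices with $\mathbf 1^{\top}G\mathbf 1\le1$.
\item Averaging $G$ over all coordinate permutations keeps it feasible, because the constraint is linear and the feasible set is convex. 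By concavity of $\log\det$, the average does not decrease $\log\det$. So the maximizer may be taken to have all off-diagonal entries equal to some $t$.
\item For such $G$, $\det G=(1-t)^{j-1}(1+(j-1)t)$, and the constraint reads $j+j(j-1)t\le1$.
\item This determinant is decreasing in $t$ on the relevant range, so the optimum is $t=-1/j$. That gives $D_j=(1+1/j)^{j-1}/j$, which matches $c_j$ above.
\end{itemize}
Only the upper bound on $p_j$ is needed.

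The expected number of bad $j$-tuples is then at most $2^{\,jRn-c_jn+o(n)}$. This is $o(M)$ as soon as $(j-1)R<c_j$, that is,
\[
R<f(j):=\frac{c_j}{j-1}=\frac12\Bigl(\frac{\log_2 j}{j-1}-\log_2\bigl(1+\tfrac1j\bigr)\Bigr).
\]
Summing over the finitely many $j\le k$, the construction works for every $R<\min_{2\le j\le k}f(j)$. The step I expect to be the main obstacle is showing that this minimum equals $f(k)=\Lrate_k^{(\HK)}$, i.e.\ that $f$ is nonincreasing in $j$. Numerically $f(2)\approx0.208$ and $f(3)\approx0.188$. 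I would first try to prove monotonicity by differentiating the real-variable extension $\frac{\ln x}{x-1}-\ln(1+\frac1x)$. The term $\frac{\ln x}{x-1}$ has derivative of order $-\frac{\ln x}{x^2}$, which dominates the derivative $\frac{1}{x(x+1)}$ of $-\ln(1+\frac1x)$ once $x\ge 3$. The cases $j=2,3$ can then be checked by hand. If this turns out to be messy, a fallback is to bound the minimum from below by $(\log_2 k-\log_2 e)/(2k)$ directly, which would still give the asymptotic statement.

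The remaining claims are elementary. We have
\[
\log_2\bigl(1+\tfrac1k\bigr)=\frac{\log_2 e}{k}+O(k^{-2}), \qquad \frac{\log_2 k}{k-1}=\frac{\log_2 k}{k}+O\!\Bigl(\frac{\log k}{k^2}\Bigr),
\]
which give the expansion. For the uniform inequality valid for every $k\ge2$, use $\ln(1+x)\le x$, so that $\log_2(1+\frac1k)\le\frac{\log_2e}{k}$, together with $\frac{\log_2k}{k-1}\ge\frac{\log_2k}{k}$.
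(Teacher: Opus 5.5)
Your proposal is correct and is essentially the paper's argument. The paper simply invokes the alteration consequence of Herold--Kirshanova's configuration estimates over all signed support sizes $2\le j\le k$; your Gram-density and symmetrization computation is a self-contained derivation of that estimate, and your monotonicity of $f$ handles the minimum over $j$, which the paper leaves implicit.

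Two small fixes are needed. First, on $[-1/(j-1),-1/j]$ the determinant $(1-t)^{j-1}(1+(j-1)t)$ is \emph{increasing} in $t$, not decreasing: its derivative is $-j(j-1)t(1-t)^{j-2}>0$ for $t<0$, and that is why the optimum is the endpoint $t=-1/j$. Second, your ``order $-\ln x/x^2$'' heuristic should be replaced by an exact argument, and one is available. For $x>1$, the condition $\frac{d}{dx}\bigl(\frac{\ln x}{x-1}-\ln(1+\frac1x)\bigr)<0$ is equivalent to $(x+1)\ln x>2(x-1)$. This holds because $\ln x-\frac{2(x-1)}{x+1}$ vanishes at $x=1$ and has derivative $\frac{(x-1)^2}{x(x+1)^2}\ge0$. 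So no fallback is needed.
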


The rate bound is the standard alteration consequence of the configuration estimates in \cite[Corollary~1, based on Theorems~1--2]{HK17}, applied to all signed support sizes from two through $k$. The exponent was predicted in \cite[Eq.~(3.2)]{BLS16} and established as the random-list exponent in \cite[Theorem~3]{HK17}. The expansion and the last inequality are elementary consequences of the formula for $\Lrate_k^{(\HK)}$.

%%%%%%%%%%%%%%%%%%%%%%%%%%%%%%%%%%%%%%%%%%%%%%%%%%%%%%%%%%%%%%%%%%%%%
%%%%%%%%%%%%%%%%%%%%%%%%%%%%%%%%%%%%%%%%%%%%%%%%%%%%%%%%%%%%%%%%%%%%%
%%%%%%%%%%%%%%%%%%%%%%%%%%%%%%%%%%%%%%%%%%%%%%%%%%%%%%%%%%%%%%%%%%%%%

\section{The upper bound}
\label{sec:upper}

\begin{theorem}[Half-tuple packing bound]
\label{thm:upper}
% For $k\ge2$ and $n \geq 2$,
% \begin{align}\label{eq:upper-size-function}
%  N_k(n)\le\min_{1\le r\le\lfloor k/2\rfloor}
%  \left\{r-1+\left[
%   2^{r-1}r!\,A\!\left(n+1,1-\frac1{2r}\right)
%  \right]^{1/r}\right\}.
% \end{align}
For every fixed $k\ge2$,
\begin{align}\label{eq:main-upper}
 \Rrate_k \le \min_{1\le r\le\lfloor k/2\rfloor} \frac{1}{r} \, \kappa\!\left(1-\frac1{2r}\right).
\end{align}
\end{theorem}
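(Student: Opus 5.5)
The plan is to build, from a large $k$-irreducible set, a large spherical code with maximum inner product $1-\frac1{2r}$ in dimension $n+1$. Its points will be normalized sums of $r$-subsets, lifted by one coordinate.

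Fix $1\le r\le\lfloor k/2\rfloor$ and let $C=\{\vx_1,\dots,\vx_N\}\subset\sph^{n-1}$ be irreducible through order $k$. Flipping signs of individual vectors preserves \eqref{eq:irreducible}, since the condition ranges over all sign patterns. So for any $\varepsilon\in\{-1,1\}^N$ the set $\{\varepsilon_i\vx_i\}$ is again irreducible through order $k$. For an $r$-subset $I\subseteq[N]$ put $\vu_I=\sum_{i\in I}\varepsilon_i\vx_i$, using \emph{all plus} signs relative to the chosen orientation $\varepsilon$.

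The point of this convention is that for distinct $r$-subsets $I\ne J$, the common indices cancel. Hence $\vu_I-\vu_J$ is a $\pm1$-signed sum over the symmetric difference $I\triangle J$, whose size lies between $2$ and $2r\le k$. Irreducibility then gives $\norm{\vu_I-\vu_J}>1$. No coefficient $2$ can arise, which is exactly why we fix one orientation per vector rather than one sign pattern per subset.

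Next we control norms. Call $I$ \emph{good} if $\norm{\vu_I}^2\le r$. For a good $I$, define
\begin{align*}
\vz_I=\Bigl(\vu_I,\sqrt{r-\norm{\vu_I}^2}\Bigr)\in\RR^{n+1},
\end{align*}
so that $\norm{\vz_I}^2=r$ and $\norm{\vz_I-\vz_J}^2\ge\norm{\vu_I-\vu_J}^2>1$. After dividing by $\sqrt r$ we obtain unit vectors with pairwise inner products $\langle\vz_I,\vz_J\rangle/r=1-\norm{\vz_I-\vz_J}^2/(2r)<1-\frac1{2r}$. The good subsets therefore form a spherical code counted by $A\bigl(n+1,1-\frac1{2r}\bigr)$.

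It remains to show that many subsets can be made good simultaneously. Here I would choose $\varepsilon$ uniformly at random. For a fixed $I$, the restriction $\varepsilon|_I$ is uniform on $\{\pm1\}^r$, and $\norm{\vu_I}$ depends only on the class of $\varepsilon|_I$ modulo global sign, which is one of $2^{r-1}$ equally likely classes. Averaging over all sign patterns gives $\EE\norm{\sum_{i\in I}s_i\vx_i}^2=r$, so at least one class has norm squared at most $r$. Hence $\Pr[I\text{ good}]\ge2^{1-r}$. By linearity of expectation some $\varepsilon$ yields at least $2^{1-r}\binom Nr$ good subsets, and so
\begin{align*}
\binom Nr\le2^{r-1}A\bigl(n+1,1-\tfrac1{2r}\bigr).
\end{align*}
Using $\binom Nr\ge (N-r+1)^r/r!$ yields the finite bound $N_k(n)\le r-1+\bigl[2^{r-1}r!\,A(n+1,1-\frac1{2r})\bigr]^{1/r}$.

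Finally we take rates. Since $r$ and $k$ are fixed, the constants $2^{r-1}r!$ and the additive $r-1$ vanish in $\frac1n\log_2$, and the shift $n\to n+1$ does not change the $\limsup$. This gives $\Rrate_k\le\frac1r\kappa(1-\frac1{2r})$, and minimizing over $r$ gives \eqref{eq:main-upper}.

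The step I expected to be the main obstacle was the sign-consistency issue. Choosing a short sign pattern separately for each subset could create coefficients $\pm2$ on shared indices, which the definition of irreducibility does not cover. The random global orientation resolves this, at the cost of only the factor $2^{r-1}$. It needs exactly the observation that a fixed subset sees a uniformly random sign class, together with the averaging bound $\EE\norm{\cdot}^2=r$. The cumulative hypothesis (all support sizes $2,\dots,2r$) is used because $|I\triangle J|$ can take every even value in that range.
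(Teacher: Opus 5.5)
Your proposal is correct and is essentially the paper's own proof. A uniformly random global signing gives at least $2^{1-r}\binom Nr$ subset sums of squared norm at most $r$; cancellation on $I\triangle J$ gives pairwise distances greater than $1$; and the one-coordinate lift to $\sph^{n}$, scaled by $1/\sqrt r$, yields a code with inner products below $1-\frac1{2r}$ (Lemmas~\ref{lem:signing}, \ref{lem:separation} and~\ref{lem:lift}). The differences are cosmetic: you use full irreducibility through order $k$ where the paper notes that the even support sizes $2,4,\ldots,2r$ suffice (so its bound also covers $\Rrate^{\ev}_{2m}$), and you write out the finite bound $N_k(n)\le r-1+\bigl[2^{r-1}r!\,A(n+1,1-\frac1{2r})\bigr]^{1/r}$ explicitly.
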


The following three lemmas prepare the proof of Theorem~\ref{thm:upper}. A common signing yields many short half-tuple sums, irreducibility separates them, and a lifting turns them into a spherical code. 

\begin{lemma}[A global signing with many short sums]\label{lem:signing}
Let $\vx_1,\ldots,\vx_N\in\sph^{n-1}$ and $1\le r\le N$. There is a signing $\vs=(s_1,\ldots,s_N)\in\{-1,1\}^N$ such that, writing $\vy_A=\sum_{i\in A}s_i\vx_i$,
\begin{align}\label{eq:short-sums}
 \#\left\{A\in\binom{[N]}r:\norm{\vy_A}^2\le r\right\}
 \ge2^{1-r}\binom Nr.
\end{align}
\end{lemma}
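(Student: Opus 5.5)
Choose the signing uniformly at random and average; no structure of the vectors is needed. Let $\vs$ be uniform on $\{-1,1\}^N$, and fix an $r$-subset $A$. Then
\begin{align*}
 \norm{\vy_A}^2=r+\sum_{\{i,j\}\subset A,\,i\ne j}2s_is_j\langle\vx_i,\vx_j\rangle ,
\end{align*}
so $\norm{\vy_A}^2\le r$ holds exactly when the cross term $Q_A(\vs)=\sum_{i<j\in A}s_is_j\langle\vx_i,\vx_j\rangle$ is at most zero.

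The key step is to show that, for each fixed $A$, the probability that $Q_A(\vs)\le 0$ is at least $2^{1-r}$. I would use a symmetry-plus-averaging argument. First, $Q_A$ depends only on $(s_i)_{i\in A}$ and is invariant under $\vs\mapsto-\vs$. Hence it takes at most $2^{r-1}$ values, one for each of the $2^{r-1}$ classes of signings of $A$ modulo global sign, and each class is equally likely. Second, each product $s_is_j$ with $i\ne j$ has mean zero, so $\EE[Q_A]=0$. A mean-zero quantity taking its values on finitely many equally likely classes must be $\le 0$ on at least one class. That class has probability $2/2^{r}=2^{1-r}$. The case $r=1$ is trivial, since then $\norm{\vy_A}^2=1$ always.

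Summing over $A$ with linearity of expectation gives
\begin{align*}
 \EE\,\#\{A:\norm{\vy_A}^2\le r\}\ge 2^{1-r}\binom Nr .
\end{align*}
Some signing therefore attains at least the expectation, which is \eqref{eq:short-sums}.

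The only step needing care is the per-subset probability bound, in particular getting $2^{1-r}$ rather than $2^{-r}$. Counting classes modulo the global sign flip is exactly what provides the extra factor of two. Everything else is linearity of expectation.
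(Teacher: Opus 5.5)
Your proposal is correct and is essentially the paper's proof: a uniformly random signing with $\EE\norm{\vy_A}^2=r$, then a sign pattern on $A$ of squared norm at most $r$, paired with its negation to get probability at least $2^{1-r}$, then linearity of expectation. Working with the cross term $Q_A$ and classes modulo global sign is only a rephrasing of the paper's remark that ``its negative has the same norm.''
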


\begin{proof}
Choose the entries of $\vs$ independently and uniformly. For every fixed $r$-subset $A$,
\begin{align}
 \EE_{\vs}\norm{\vy_A}^2
 =\sum_{i\in A}\norm{\vx_i}^2+
   \sum_{\substack{i,j\in A, i \neq j}}
   \EE(s_i s_j)\langle\vx_i,\vx_j\rangle=r.
\end{align}
At least one of the $2^r$ sign patterns on $A$ therefore has squared norm at most $r$. Its negative has the same norm, so at least two patterns do. Consequently, the probability that $\norm{\vy_A}^2\le r$ is at least $2^{1-r}$. By linearity of expectation, the expected number of such subsets $A$ is at least $2^{1-r}\binom Nr$. Some global signing attains at least this number, proving~\eqref{eq:short-sums}.
\end{proof}

\begin{lemma}[Separation of half-tuple sums]
\label{lem:separation}
Suppose $C$ is even-irreducible through order $2m$. For any global signing $\vs$ and any $r\le m$, the sums $\vy_A$ over distinct $r$-subsets obey $\norm{\vy_A-\vy_B}>1$. In particular, all these sums are distinct.
\end{lemma}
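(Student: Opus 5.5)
The plan is to compute the difference $\vy_A-\vy_B$ directly and read it as a signed sum of the kind controlled by even-irreducibility. Let $A\neq B$ be distinct $r$-subsets of $[N]$. The common indices cancel:
\begin{align}
 \vy_A-\vy_B=\sum_{i\in A\setminus B}s_i\vx_i-\sum_{i\in B\setminus A}s_i\vx_i=\sum_{i\in I}t_i\vx_i,
\end{align}
where $I=A\triangle B$. Here $t_i=s_i$ for $i\in A\setminus B$ and $t_i=-s_i$ for $i\in B\setminus A$, so every $t_i\in\{-1,1\}$. The indices in $I$ are distinct because $A\setminus B$ and $B\setminus A$ are disjoint. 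The representation therefore has exactly the form appearing in~\eqref{eq:irreducible}.

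Next, the support size $|I|$ is controlled. Since $|A|=|B|=r$, we have $|A\setminus B|=|B\setminus A|=r-|A\cap B|=:j$, so $|I|=2j$ is even. Since $A\neq B$, we get $j\ge1$, and since $j\le r\le m$, we get $2\le |I|\le 2m$. Thus $|I|\in\{2,4,\ldots,2m\}$, which is precisely the range of support sizes where even-irreducibility through order $2m$ applies (Definition~\ref{def:tuple}). Applying the hypothesis gives $\norm{\vy_A-\vy_B}=\norm{\sum_{i\in I}t_i\vx_i}>1$. In particular the difference is nonzero, so all the sums are distinct.

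No step is a serious obstacle. The only points needing care are checking that the symmetric difference has even size, so that only even-order irreducibility is needed, and checking that this size never exceeds $2m$; the bound $|I|\le 2m$ is where the hypothesis $r\le m$ is used. The global signing $\vs$ plays no role beyond producing coefficients in $\{-1,1\}$, so the argument holds for every signing, as the statement requires.
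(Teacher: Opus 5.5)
Your proof is correct and follows the paper's argument: cancel the common indices, write $\vy_A-\vy_B$ as a $\pm1$-signed sum over $A\triangle B$ of even size $2(r-|A\cap B|)\in\{2,4,\ldots,2r\}\subseteq\{2,4,\ldots,2m\}$, and apply even-irreducibility. You also check explicitly that the relabeled coefficients $t_i$ are signs and that the indices are distinct, which the paper leaves implicit.
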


\begin{proof}
For $A\ne B$, cancellation gives
\begin{align}
\vy_A-\vy_B=\sum_{i\in A\setminus B}s_i \vx_i-\sum_{i\in B\setminus A}s_i \vx_i.
\end{align}
Because $|A|=|B|=r$, the support $A\triangle B$ has size $2(r-|A\cap B|)\in\{2,4,\ldots,2r\}$. All its coefficients are signs, so even irreducibility proves the claim. This also explains why forbidding only support size $2m$ would not suffice: overlapping subsets can leave smaller supports.
\end{proof}

\begin{lemma}[Lifting a ball packing to a sphere]
\label{lem:lift}
Let $Y\subset\RR^n$ lie in the closed ball of radius $\rho>0$ centered at the origin, and suppose distinct points of $Y$ have distance greater than $\ell>0$. If $\ell/\rho\le2$, then $|Y|\le A(n+1,1-\ell^2/(2\rho^2))$.
\end{lemma}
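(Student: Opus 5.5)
The plan is to lift each point of $Y$ to a unit vector in $\RR^{n+1}$ through a map that turns Euclidean separation greater than $\ell$ into an inner-product bound. For $\vy\in Y$ put
\begin{align}
 \vu_{\vy}=\frac{1}{R}\bigl(\vy,\,h\bigr)\in\RR^{n+1},\qquad R=\sqrt{\rho^2+h^2},
\end{align}
with a fixed height $h\ge0$ still to be chosen. These vectors do not yet lie on the sphere, since $\norm{\vy}$ may be smaller than $\rho$. So instead I place each point on the sphere of radius $\rho$ by adding a vertical coordinate. Concretely, map $\vy\mapsto \vz_{\vy}=(\vy,\sqrt{\rho^2-\norm{\vy}^2})\in\RR^{n+1}$. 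This point lies on the sphere of radius $\rho$ in $\RR^{n+1}$, in the closed upper hemisphere.

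The key observation is that adding a coordinate never shrinks distances:
\begin{align}
 \norm{\vz_{\vy}-\vz_{\vy'}}^2=\norm{\vy-\vy'}^2+\Bigl(\sqrt{\rho^2-\norm{\vy}^2}-\sqrt{\rho^2-\norm{\vy'}^2}\Bigr)^2>\ell^2 .
\end{align}
Consequently the rescaled points $\vu_{\vy}=\vz_{\vy}/\rho\in\sph^{n}$ satisfy $\norm{\vu_{\vy}-\vu_{\vy'}}^2>\ell^2/\rho^2$. Using $\norm{\vu-\vu'}^2=2-2\langle\vu,\vu'\rangle$, this becomes $\langle\vu_{\vy},\vu_{\vy'}\rangle<1-\ell^2/(2\rho^2)$. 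In particular the map $\vy\mapsto\vu_{\vy}$ is injective, because distinct points of $Y$ go to distinct points.

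The lifted family is therefore a spherical code in $\sph^{n}$ whose maximum inner product is at most $\alpha=1-\ell^2/(2\rho^2)$. Definition~\ref{def:codes} then gives $|Y|\le A(n+1,\alpha)$. The hypothesis $\ell/\rho\le2$ is what guarantees $\alpha\ge-1$, so that $A(n+1,\alpha)$ is defined.

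I do not expect a real obstacle here. The only points to check are that a point of $Y$ on the boundary sphere gets vertical coordinate $0$, which is fine, and that the strict inequality is compatible with the non-strict convention $\le\alpha$ in Definition~\ref{def:codes}, which it is.
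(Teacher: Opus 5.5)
Your proof is correct and follows the paper's argument: the same lift $\vy\mapsto\rho^{-1}\bigl(\vy,\sqrt{\rho^2-\norm{\vy}^2}\bigr)$ onto $\sph^{n}$, the observation that the extra coordinate can only increase distances, the conversion to an inner-product bound via $\norm{\vu-\vu'}^2=2-2\langle\vu,\vu'\rangle$, and the use of $\ell/\rho\le2$ to get $\alpha\ge-1$. The only flaw is the abandoned opening construction with height $h$: it uses the symbol $\vu_{\vy}$ for a different map, so you should delete it.
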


\begin{proof}
Map each $\vy\in Y$ to $\vu_{\vy}=\rho^{-1}\bigl(\vy,\sqrt{\rho^2-\norm{\vy}^2}\bigr)\in\sph^n$. The first $n$ coordinates make the map injective, and
\begin{align}
 \norm{\vu_{\vy}-\vu_{\vz}}^2
 =\frac{\norm{\vy-\vz}^2+(\sqrt{\rho^2-\norm{\vy}^2}-\sqrt{\rho^2-\norm{\vz}^2})^2}{\rho^2}
 >\frac{\ell^2}{\rho^2}.
\end{align}
Since the images are unit vectors, $\langle\vu_{\vy},\vu_{\vz}\rangle=1-\norm{\vu_{\vy}-\vu_{\vz}}^2/2 <1-\ell^2/(2\rho^2)$. Thus they form the required spherical code.
\end{proof}

\begin{proof}[Proof of Theorem~\ref{thm:upper}]
Put $m=\lfloor k/2\rfloor$. Fix $r\le m$ and a set of size $N\ge r$ that is even-irreducible through order $2m$. Lemmas~\ref{lem:signing} and~\ref{lem:separation} produce at least $2^{1-r}\binom Nr$ distinct, mutually separated points in a ball of radius $\sqrt r$. Lemma~\ref{lem:lift}, with $\rho=\sqrt r$ and $\ell=1$, gives
\begin{align}\label{eq:finite-upper}
 2^{1-r}\binom Nr\le A\!\left(n+1,1-\frac1{2r}\right).
\end{align}
Minimizing over $r$ proves the cardinality bound for even-irreducible sets. Full irreducibility implies these even conditions, so the bound applies to $N_k(n)$ as well.

For each fixed $r$, the same estimate gives
\begin{align}
 \log_2N\le\frac1r\log_2 A\!\left(n+1,1-\frac1{2r}\right)+O_r(1).
\end{align}
Divide by $n$, take the upper limit as $n\to\infty$, and use $(n+1)/n\to1$. The definition of $\kappa$ then gives $\Rrate^{\ev}_{2m}\le r^{-1}\kappa(1-1/(2r))$. Minimizing over $r\le m$ proves the rate statements, including for odd $k$, since $\Rrate_k\le\Rrate^{\ev}_{2m}$.
\end{proof}

The factor $1/r$ comes from converting roughly $N^r$ half-tuple sums into a spherical code. Their radius is $\sqrt r$, so lifting and scaling give pairwise inner products below $1-1/(2r)$. These are the two geometric features responsible for the large-$k$ behavior.

Substituting $\kappa^{(\OAI)}$ from Lemma~\ref{lem:oai} into Theorem~\ref{thm:upper}, with $r=1$ and $r=k/2$, gives the following explicit form of our upper bound.

\begin{corollary}[Explicit upper bound]
\label{cor:explicit}
For even $k\ge4$,
\begin{align}\label{eq:rate-upper}
 \Rrate_k\le\Urate_k
 :=\min\left\{0.396602,
             \frac{\log_2k-0.208770}{k}\right\}.
\end{align}
The same upper bound holds for $\Rrate^{\ev}_k$. For $k=2$, it reads $\Rrate_2\le\Urate_2:=0.396602$. For odd $k$, the corresponding bounds at $k-1$ apply.
\end{corollary}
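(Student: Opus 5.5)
The plan is to specialize Theorem~\ref{thm:upper} to the two extreme choices $r=1$ and $r=k/2$, and bound $\kappa$ at the resulting arguments by Lemma~\ref{lem:oai}.

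For even $k\ge4$, write $m=k/2\ge2$. The proof of Theorem~\ref{thm:upper} gives
\begin{align}
 \Rrate_k\le\Rrate^{\ev}_k\le\frac1r\,\kappa\!\left(1-\frac1{2r}\right)\qquad\text{for every } 1\le r\le m .
\end{align}
This bound is taken from the even-irreducible part of that proof, so the same bound also covers $\Rrate^{\ev}_k$.

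\emph{The case $r=1$.} Here the argument of $\kappa$ is $1/2$. Since $2(1-1/2)=1$, the logarithmic term in Lemma~\ref{lem:oai} vanishes, and the first branch gives $\kappa(1/2)\le0.396602$.

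\emph{The case $r=m=k/2$.} Here the argument is $\alpha=1-1/k$, and $\alpha\ge3/4$ because $k\ge4$. So the second branch applies. We have $2(1-\alpha)=2/k$, hence
\begin{align}
 -\tfrac12\log_2(2/k)=\tfrac12(\log_2k-1),
 \qquad
 \kappa(1-1/k)\le\tfrac12\log_2k-0.5+0.395615=\tfrac12\log_2k-0.104385 .
\end{align}
Dividing by $r=k/2$ yields
\begin{align}
 \frac{2}{k}\left(\tfrac12\log_2k-0.104385\right)=\frac{\log_2k-0.208770}{k}.
\end{align}
The minimum of the two cases gives $\Urate_k$ as stated.

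\emph{Remaining cases.} For $k=2$ only $r=1$ is available, which gives $\Urate_2=0.396602$. For odd $k$, monotonicity~\eqref{eq:monotonicity} gives $\Rrate_k\le\Rrate_{k-1}$, since $\lfloor k/2\rfloor=(k-1)/2$ and $k-1$ is even. So the bound at $k-1$ applies.

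There is no real obstacle. The only points to check are that the second branch of Lemma~\ref{lem:oai} is valid, i.e.\ $1-1/k\ge3/4$, and that the constant arithmetic $2\times0.104385=0.208770$ is exact.
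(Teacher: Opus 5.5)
Your proposal is correct and is the paper's own argument: substitute $\kappa^{(\OAI)}$ into Theorem~\ref{thm:upper} at $r=1$ (first branch, $\alpha=1/2$) and at $r=k/2$ (second branch, $\alpha=1-1/k\ge 3/4$), use the even-irreducible part of the proof to cover $\Rrate^{\ev}_k$, and use monotonicity for odd $k$. Your arithmetic $\tfrac{2}{k}\bigl(\tfrac12\log_2 k-\tfrac12+0.395615\bigr)=(\log_2 k-0.208770)/k$ is also correct.
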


\begin{remark}[The cases $k=2$ and $k=4$]
\label{rem:small-k}
For $k=2$, Theorem~\ref{thm:upper} gives $\Rrate_2\le\kappa(1/2)$, recovering the pairwise bound. At $k=4$, substituting $\kappa^{(\OAI)}$ gives the half-tuple term $0.4478075$, which exceeds the pairwise estimate $0.396602$. Our explicit upper bound therefore retains the pairwise estimate at $k=4$, and first improves on it at $k=6$. This does not imply that the true four-tuple rate equals the pairwise rate.
\end{remark}

%%%%%%%%%%%%%%%%%%%%%%%%%%%%%%%%%%%%%%%%%%%%%%%%%%%%%%%%%%%%%%%%%%%%%
%%%%%%%%%%%%%%%%%%%%%%%%%%%%%%%%%%%%%%%%%%%%%%%%%%%%%%%%%%%%%%%%%%%%%
%%%%%%%%%%%%%%%%%%%%%%%%%%%%%%%%%%%%%%%%%%%%%%%%%%%%%%%%%%%%%%%%%%%%%

\section{Large-\texorpdfstring{$k$}{k} asymptotics}
\label{sec:asymptotics}

We now let $k$ grow \emph{after} taking the dimension limit defining each rate. The previous estimates are not asserted to hold uniformly when $k=k(n)$ grows with $n$.

\begin{theorem}[Two-sided asymptotics]\label{thm:asymptotics}
As $k\to\infty$ through integers,
\begin{align}\label{eq:two-sided-asymptotics}
 \frac{\log_2k-\log_2e}{2k}
 \le\Rrate_k\le
 \frac{\log_2k+\log_2(e/\pi)+o(1)}{k}.
\end{align}
The same statement holds for $\Rrate^{\ev}_k$ through even $k$. In particular, $\Rrate_k=\Theta((\log k)/k)$ and
\begin{align}\label{eq:normalized-limits}
 \frac12\le\liminf_{k\to\infty}\frac{k\Rrate_k}{\log_2k}
 \le\limsup_{k\to\infty}\frac{k\Rrate_k}{\log_2k}\le1.
\end{align}
\end{theorem}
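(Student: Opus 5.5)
The two sides of \eqref{eq:two-sided-asymptotics} are handled separately: the lower bound comes from Lemma~\ref{lem:hk-lower}, the upper bound from Theorem~\ref{thm:upper} with a large choice of $r$ fed into the $\alpha\uparrow1$ asymptotics of Lemma~\ref{lem:oai}.

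\textbf{Lower bound.} Lemma~\ref{lem:hk-lower} states directly that $\Rrate_k\ge\Lrate_k^{(\HK)}\ge(\log_2k-\log_2e)/(2k)$ for every $k\ge2$. For the even-only version, \eqref{eq:monotonicity} gives $N_{2m}(n)\le N^{\ev}_{2m}(n)$, hence $\Rrate^{\ev}_k\ge\Rrate_k$ for even $k$, and the same lower bound follows.

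\textbf{Upper bound.} I would take $r=m=\lfloor k/2\rfloor$ in Theorem~\ref{thm:upper}. This gives $\Rrate_k\le\Rrate^{\ev}_{2m}\le m^{-1}\kappa(1-1/(2m))$, and the same inequality holds for $\Rrate^{\ev}_k$ when $k=2m$. Set $\alpha=1-1/(2m)$, so $1-\alpha=1/(2m)$ and $\alpha\uparrow1$ as $k\to\infty$. The asymptotic form in Lemma~\ref{lem:oai} then yields
\begin{align}
 \kappa\!\left(1-\frac1{2m}\right)\le\frac12\log_2\!\left(\frac{2me}{\pi}\right)+o(1).
\end{align}
Dividing by $m$ gives
\begin{align}
 \Rrate_k\le\frac{\log_2(2m)+\log_2(e/\pi)+o(1)}{2m}.
\end{align}
For even $k=2m$ this is exactly the claimed bound. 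For odd $k=2m+1$ the bound is $(\log_2(k-1)+\log_2(e/\pi)+o(1))/(k-1)$, and I would rewrite it in terms of $k$. We have $\log_2(k-1)=\log_2k+O(1/k)$, and
\begin{align}
 \frac1{k-1}-\frac1k=\frac1{k(k-1)},
\end{align}
so the difference between the $k-1$ and $k$ versions is $O((\log k)/k^2)=o(1)/k$. This error is absorbed into the $o(1)$ term.

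\textbf{The main obstacle} is that the hierarchy estimate in Lemma~\ref{lem:oai} is only an asymptotic statement as $\alpha\uparrow1$, not a uniform bound. So I must check that composing it with $\alpha=1-1/(2m)$ and dividing by $m$ is legitimate: the $o(1)$ there tends to zero as $m\to\infty$, and after dividing by $2m$ it becomes an $o(1)/k$ correction inside the numerator. This is consistent with the order of limits stated before the theorem, since $n\to\infty$ is taken first for fixed $k$.

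\textbf{Consequences.} Dividing both bounds by $(\log_2k)/k$ and letting $k\to\infty$ gives $\liminf_k k\Rrate_k/\log_2 k\ge\tfrac12$ and $\limsup_k k\Rrate_k/\log_2 k\le1$, which is \eqref{eq:normalized-limits}. These two limits together give $\Rrate_k=\Theta((\log k)/k)$.
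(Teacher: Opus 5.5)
Your proposal is correct and matches the paper's proof: the lower bound comes from Lemma~\ref{lem:hk-lower}, and the upper bound takes $r=\lfloor k/2\rfloor$ in Theorem~\ref{thm:upper} and applies the $\alpha\uparrow1$ estimate of Lemma~\ref{lem:oai}, with the odd case handled at $k-1$ at a cost of $O((\log k)/k^2)$. You also spell out a step the paper leaves implicit, namely using $N_{2m}(n)\le N^{\ev}_{2m}(n)$ to carry the lower bound over to $\Rrate^{\ev}_k$.
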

\begin{proof}
The lower bound is contained in Lemma~\ref{lem:hk-lower}. For even $k$, take $r=k/2$ in Theorem~\ref{thm:upper} and use the small-angle estimate in Lemma~\ref{lem:oai}:
\begin{align}
 \Rrate^{\ev}_k\le\frac2k\kappa(1-1/k)
 \le\frac{\log_2k+\log_2(e/\pi)+o(1)}{k}.
\end{align}
For odd $k$, apply the even estimate at $k-1$ and use $\Rrate_k\le\Rrate^{\ev}_{k-1}$. Replacing $k-1$ by $k$ costs $O((\log k)/k^2)=o(1/k)$. Dividing by $(\log_2k)/k$ proves \eqref{eq:normalized-limits}.\qedhere
\end{proof}

The upper and lower bounds in Fig.~\ref{fig:comparison} are asymptotically a factor two apart: $\Urate_k/\Lrate_k^{(\HK)}\to2$ as $k\to\infty$ through even integers.

\begin{figure}[!t]
\centering
\begin{tikzpicture}
\begin{groupplot}[
 group style={group size=2 by 1,horizontal sep=2.5cm},
 width=0.44\textwidth,height=0.20\textheight,
 xmode=log,log basis x=2,xlabel={even tuple order $k$},
 grid=major,grid style={draw=gray!18},
 tick label style={font=\scriptsize},label style={font=\footnotesize},
]
\nextgroupplot[ylabel={$\Rrate_k$},ymin=0,
 legend pos=north east,
 legend style={font=\scriptsize,draw=none,fill=none,legend columns=1},
 title={(a) Rates},title style={font=\footnotesize}]
\addplot+[deepblue,thick,mark=*,mark size=1.6pt,
 mark options={fill=white,draw=deepblue,line width=0.5pt}]
 table[x=k,y=oai]{bounds_data.dat};
\addlegendentry{$\Urate_k$}
\addplot+[warmred,thick,mark=square*,mark size=1.3pt,
 mark options={fill=warmred,draw=warmred}]
 table[x=k,y=lower]{bounds_data.dat};
\addlegendentry{$\Lrate_k^{(\HK)}$}
\nextgroupplot[ylabel={$\Rrate_k \cdot (k / \log_2 k)$},ymin=-0.1,ymax=1.1,
 ytick={0,0.25,0.5,0.75,1},
 title={(b) Normalized rates},title style={font=\footnotesize}]
\addplot[deepblue,thick,densely dotted,forget plot]
 coordinates {(2,1) (1024,1)};
\addplot[warmred,thick,densely dotted,forget plot]
 coordinates {(2,0.5) (1024,0.5)};
\addplot+[deepblue,thick,mark=*,mark size=1.6pt,
 mark options={fill=white,draw=deepblue,line width=0.5pt}]
 table[x=k,y=oai_norm]{bounds_data.dat};
\addplot+[warmred,thick,mark=square*,mark size=1.3pt,
 mark options={fill=warmred,draw=warmred}]
 table[x=k,y=lower_norm]{bounds_data.dat};
\end{groupplot}
\end{tikzpicture}
\caption{The lower bound $\Lrate_k^{(\HK)}$ from Lemma~\ref{lem:hk-lower} and our upper bound $\Urate_k$ from~\eqref{eq:rate-upper}. Left: the two rate bounds. Right: the same curves divided by $(\log_2k) / k$, with horizontal asymptotes $1/2$ and $1$.}
\label{fig:comparison}
\end{figure}

% \backmatter
% \begin{samepage}
% \section*{Statements and Declarations}
% \noindent\textbf{Funding.} The author received no specific funding for this work.

% \noindent\textbf{Competing interests.} The author has no relevant interests to disclose.

% \noindent\textbf{Data and code availability.} The source package contains numerical data for Fig.~\ref{fig:comparison}.

% \noindent\textbf{Use of generative artificial intelligence.} OpenAI language models assisted with exploration, numerical evaluation, and manuscript preparation.
% \end{samepage}

\setlength{\bibsep}{0.25\baselineskip}

\end{document}